\title{Quantitative Smoothing of Polyhedral Manifolds}
\author{Spencer Cattalani\thanks{Partially supported by NSF grant DMS 2246485 and the Simons Foundation}}
\date{\today}
\newtheorem{thm}{Theorem}
\newtheorem{prp}[thm]{Proposition}
\newtheorem{lmm}[thm]{Lemma}   
\newtheorem{crl}[thm]{Corollary} 
\newtheorem{eg}[thm]{Example}
\theoremstyle{definition}
\newtheorem{dfn}[thm]{Definition}
\theoremstyle{remark}
\newtheorem{rmk}[thm]{Remark}
\theoremstyle{remark}
\def\BE#1{\begin{equation}\label{#1}}
\def\EE{\end{equation}}
\def\e_ref#1{(\ref{#1})}
\def\lra{\longrightarrow}
\def\R{\mathbb R}
\def\Z{\mathbb Z}
\begin{document}

\maketitle

\begin{abstract}
We use a recent result of C.~Lange to obtain a converse to a theorem of B.~Bowditch in dimension at most~$4$. In particular, we show that, for $n \leq 4$, a polyhedral $n$-manifold $X$ with bounded geometry is $K$-bi-Lipschitz homeomorphic to a Riemannian manifold $M$. We bound the constant $K$, the curvature, and the injectivity radius of $M$ by the bounds on the geometry of $X$.
\end{abstract}

\section{Introduction}
A major theme in metric geometry is uniformizing or approximating metric spaces; see, e.g. \cite{Cre,Nta}. Two common classes of such approximations are simplicial complexes and Riemannian manifolds. Different techniques are available in each setting and it is helpful to be able to switch between them. For applications, it is important to retain bounds on the geometry while doing so. In one direction, Bowditch \cite{Bow} (cf.~\cite{Boi}) showed that a Riemannian manifold with curvature and injectivity radius bounds can be triangulated by a simplicial complex with bounded geometry. He, furthermore, showed that the triangulation is $K$-bi-Lipschitz, where $K$ depends only on the geometry of the manifold. Using a method from geometric topology, we prove a partial converse to this result.\\

We recall that a \textit{polyhedral manifold} is a simplicial complex in which the link of every simplex is a sphere. A polyhedral manifold with a path metric such that each simplex is isometric to a Euclidean simplex is called a \textit{polyhedral metric manifold}. It is clear that the metric is determined by the lengths of the 1-simplices. A simplex with all side lengths equal is called \textit{standard}. The following definitions come essentially from \cite{Nta1} and \cite{Bow}, respectively.

\begin{dfn}
Let $(X,d)$ be a polyhedral metric manifold and $M \in \Z^+$. We will call $X$
\begin{itemize}
\item \textit{M-quasiconformal} if for each point $x \in X$ there exists $r > 0$ such that $x$ is contained in at most $M$ simplices and each such simplex is linearly $M$-bi-Lipschitz homeomorphic to the standard simplex with side length $r$.
\item \textit{M-Lipschitz} if each point of $X$ is contained in at most $M$ simplices and each simplex is linearly $M$-bi-Lipschitz homeomorphic to the standard simplex with side length $1$.
\end{itemize}
\end{dfn}

The following theorem is the main result of this paper. It plays a crucial role in the main construction of \cite{Nta1} and should be helpful in related constructions. We emphasize that the constant $K$ below depends only on the constant $M$ and the result is therefore quantitative.

\begin{thm}\label{main_thm}
Let $n \leq 4$ and $M \in \Z^+$. Then, there exists $K \geq 1$ such that:
\begin{itemize}
\item If $(X,d)$ is an $M$-quasiconformal polyhedral $n$-manifold, then it is $K$-bi-Lipschitz homeomorphic to a Riemannian manifold.
\item If $(X,d)$ is an $M$-Lipschitz polyhedral $n$-manifold, then it is $K$-bi-Lipschitz homeomorphic to a Riemannian manifold with sectional curvatures between $-K$ and $K$ and injectivity radius greater than $K^{-1}$. Furthermore, all the covariant derivatives of the curvature are bounded.
\end{itemize}
Moreover, in both cases the construction is equivariant; every simplicial isometry of $(X,d)$ induces (via the homeomorphism) a smooth isometry of the Riemannian manifold.
\end{thm}

We note that the boundedness of the covariant derivatives also follows from~\cite{Bem}. Theorem~\ref{main_thm} is false if the assumption on dimension is removed completely, because there are polyhedral manifolds which are not homeomorphic to any smooth manifold \cite{Kui}. One could ask if the assumption on dimension could be replaced by the assumption of being homeomorphic (or piecewise-diffeomorphic, or locally Lipschitz homeomorphic) to a smooth manifold. These questions are quite difficult, but may be related to the fact that the optimal constant of a bi-Lipschitz homeomorphism can detect exotic smooth structures \cite{Shi,Wel}. In any case, our proof of Theorem~\ref{main_thm} is essentially low-dimensional, and a key lemma in the argument (Lemma~\ref{canonical_smoothing_lmm}) is known to fail in higher dimension \cite{Lan}.\\

In dimension 2, we are able to make the curvature bound appearing in Theorem~\ref{main_thm} effective (cf. \cite{Bre,Cho}). We are also able to weaken the assumptions. In particular, we only need rather weak geometric assumptions and no combinatorial assumptions. This is the content of our second result. We recall the notion of cone angle in Section~\ref{dim_2_sec}.

\begin{thm}\label{dim_2_thm}
Let $(X,d)$ be a polyhedral metric surface and $K \geq 1$. If the cone angle at every vertex is between $2\pi K^{-1}$ and $2\pi K$, then $(X,d)$ is $K$-bi-Lipschitz to a Riemannian manifold $\widetilde{X}$. If $l$ is the infimal distance between vertices, then $\widetilde{X}$ can be chosen to have Gaussian curvature bounded in absolute value by
$2^9K(K\!-\!1)/l^2$.
\end{thm}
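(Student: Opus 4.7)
Since the polyhedral metric is already flat and smooth away from the vertex set, the plan is to smooth each cone singularity within a small disjoint disk around the vertex. At each vertex $v$ let $\alpha_v := \theta_v/(2\pi) \in [K^{-1}, K]$, so that the tangent cone at $v$ is the Euclidean cone $dr^2 + \alpha_v^2 r^2\, d\phi^2$ in polar coordinates with $\phi \in \mathbb{R}/2\pi\mathbb{Z}$. The first step is to show that this cone model is valid on the entire open ball $B(v, l/2) \subset (X,d)$: the exponential map from the tangent cone is a local isometry, and a Gauss--Bonnet argument on any hypothetical geodesic bigon at $v$ of perimeter $< l$ produces a contradiction, because the only curvature concentrated in the bigon's interior is the defect at $v$, which sits at a boundary vertex of the bigon. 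Hence $B(v, l/2)$ is isometric to a truncated Euclidean cone of angle $\theta_v$.

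For the smoothing, fix once and for all a $C^\infty$ monotone function $\psi: \mathbb{R} \to [0,1]$ with $\psi \equiv 0$ on $(-\infty, 0]$, $\psi \equiv 1$ on $[1, \infty)$, and $\|\psi'\|_\infty + \|\psi''\|_\infty \leq 16$ (readily arranged, e.g., as a mollification of the quintic Hermite interpolant). Set $\rho := l/2$ and, on each $B(v, \rho)$, replace the cone metric with $dr^2 + g_v(r)^2\, d\phi^2$, where
\[
g_v(r) := r\,\bigl(1 - (1 - \alpha_v)\,\psi(2r/\rho - 1)\bigr).
\]
Then $g_v(r) = r$ for $r \leq \rho/2$ (so the new metric extends smoothly through $r = 0$ as the flat metric on a Euclidean disk) and $g_v(r) = \alpha_v r$ for $r \geq \rho$ (so it matches the original cone metric outside $B(v, \rho)$). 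Because the open balls of radius $l/2$ at distinct vertices have disjoint interiors, these local modifications glue to a global $C^\infty$ Riemannian metric $\widetilde{X}$ on the same underlying space.

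It remains to verify the two quantitative bounds. The identity map $X \to \widetilde{X}$ is an isometry radially and scales the angular direction by $g_v(r)/(\alpha_v r)$, which is a convex combination of $1/\alpha_v$ and $1$ and thus lies in $[K^{-1}, K]$; so the identity is $K$-bi-Lipschitz. The Gaussian curvature of the warped-product metric is $-g_v''/g_v$, vanishing outside the annulus $[\rho/2, \rho]$. On that annulus, direct differentiation gives $|g_v''(r)| \leq 4|1 - \alpha_v|\,(\|\psi'\|_\infty + \|\psi''\|_\infty)/\rho$ and $g_v(r) \geq \min(1,\alpha_v)\,r \geq \rho/(2K)$, so
\[
\left|\frac{g_v''(r)}{g_v(r)}\right| \;\leq\; \frac{8\,K(K-1)\,(\|\psi'\|_\infty + \|\psi''\|_\infty)}{\rho^2} \;\leq\; \frac{2^9\,K(K-1)}{l^2},
\]
upon substituting $\rho = l/2$, $|1 - \alpha_v| \leq K - 1$, and the bound on $\psi$.

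The main obstacle is the initial geometric identification of $B(v, l/2)$ with a full Euclidean cone ball of angle $\theta_v$: without this identification the smoothing has no natural rotationally symmetric description, and a priori the cone model could break down well inside radius $l/2$ if the star of $v$ contains a very thin triangle. The Gauss--Bonnet/bigon argument is precisely what rescues the identification; once it is in place, the rest of the proof is routine warped-product calculus.
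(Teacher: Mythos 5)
Your proof is correct and follows essentially the same route as the paper's: cone coordinates at each vertex, a radial cutoff interpolating the angular warping factor with first and second derivatives bounded by $2^4$, the pointwise convex-combination argument for the $K$-bi-Lipschitz bound, and the warped-product curvature formula yielding $2^9K(K\!-\!1)/l^2$; your Gauss--Bonnet identification of $B(v,l/2)$ with a truncated cone is a welcome justification of a step the paper merely asserts. The only adjustment needed is to use a per-vertex radius $\rho_v$ equal to half the distance from $v$ to the nearest other vertex (as the paper does) rather than the uniform $\rho = l/2$, so that the first (bi-Lipschitz) assertion survives when vertices accumulate and $l = 0$; since $\rho_v \geq l/2$, the disjointness of the modified disks and all of your estimates go through unchanged.
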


The constant $2^9$ is not sharp. It is clear that a bound on curvature must depend on a length squared, for dimensional consistency. The reason for the quadratic dependence on $K$ is because the integral of the curvature over the area should give the angle defect and both the area and angle defect are linearly bounded by $K$. The fact that one cannot bound the Lipschitz constant without a bound on the cone angle is the content of the following proposition.

\begin{prp}\label{obstruction_prp}
Let $(X,d)$ be a polyhedral metric surface. If it is $K$-bi-Lipschitz to a Riemannian manifold, then the cone angle at every vertex is between $2 \pi K^{-2}$ and $2 \pi K^2$.
\end{prp}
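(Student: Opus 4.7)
The plan is to compare the lengths of small metric circles around a vertex $v \in X$ of cone angle $\theta$ and around its image $q = f(v)$ in $M$, where $f : X \to M$ is a $K$-bi-Lipschitz homeomorphism. The key invariant to track is
\[
\omega(p) = \lim_{r \to 0^+} \frac{\text{length}\bigl(\{x : d(x,p) = r\}\bigr)}{r},
\]
which equals the cone angle $\theta$ at a vertex of the polyhedral surface $X$ and equals $2\pi$ at every point of a Riemannian surface $M$. The goal is to show that $\omega$ can change by a factor of at most $K^2$ under a $K$-bi-Lipschitz homeomorphism. The exponent $2$ arises because lengths scale by at most $K$ and the radii at which we evaluate also scale by at most $K$, and these effects compound multiplicatively; by contrast, a naive comparison of areas of balls would yield only a $K^4$ bound.

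The technical heart is a length lower bound for enclosing Jordan curves. At a vertex $v \in X$ of cone angle $\theta$, any Jordan curve $\gamma \subset X$ winding once around $v$ and staying at distance at least $\rho$ from $v$ has length at least $\theta \rho$; this is immediate from the polar form $dr^2 + r^2 d\phi^2$ of the cone metric, since each increment of the angular coordinate contributes at least $r\,d\phi \geq \rho\,d\phi$ to arc length. An analogous statement holds at any smooth point $q \in M$: for each $\varepsilon > 0$ there exists $r_0 > 0$ such that for all $\rho < r_0$, any Jordan curve in $M$ enclosing $q$ and staying at distance $\geq \rho$ from $q$ has length $\geq (2\pi - \varepsilon)\rho$. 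This Riemannian estimate reduces, via normal coordinates in which the metric is Euclidean up to $O(r^2)$ error, to the elementary Euclidean case.

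With these in hand, the argument splits into two symmetric halves. For the lower bound $\theta \geq 2\pi K^{-2}$, fix small $r$ and let $S = \{x : d_X(x,v) = r\}$, a Jordan curve of length exactly $\theta r$ around $v$. Its image $f(S)$ is a Jordan curve in $M$ of length at most $K \theta r$, enclosing $q$, and contained in the annulus $\{y : r/K \leq d_M(y,q) \leq Kr\}$. The Riemannian lower bound applied at radius $\rho = r/K$ yields $K \theta r \geq (2\pi - \varepsilon)(r/K)$; letting first $r \to 0$ and then $\varepsilon \to 0$ gives $\theta \geq 2\pi K^{-2}$. The reverse inequality $\theta \leq 2\pi K^2$ is obtained by the same argument with $f$ replaced by $f^{-1}$, starting from the metric sphere $\{y : d_M(y,q) = \rho\}$ in $M$ (whose length is $(2\pi + o(1))\rho$) and applying the cone lower bound to its preimage in $X$.

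The principal conceptual obstacle is establishing the Riemannian length lower bound for enclosing curves uniformly in small $\rho$, but this is a standard consequence of the fact that a Riemannian surface is infinitesimally Euclidean at a smooth point, so no serious analytic input is required.
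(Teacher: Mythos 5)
Your argument is correct and follows essentially the same route as the paper: comparing the length of a small metric circle around the vertex with its image, and bounding below the length of any enclosing curve at distance $\geq \rho$ from the center (the paper packages this as Lemma~\ref{angle_meaning_lmm} and Corollary~\ref{cone_angle_crl}, with the Riemannian point replaced by a $(1+\varepsilon)$-almost-isometric cone of angle $2\pi$ rather than your normal-coordinates estimate). The one factor of $K$ from length distortion and one from the shrinking of the enclosed radius is exactly the paper's mechanism for the exponent $2$.
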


\begin{rmk}
The proof of Proposition~\ref{obstruction_prp} generalizes to higher dimension to prove the following statement.
Let $X$ be a polyhedral metric manifold which is $K$-bi-Lipschitz to a Riemannian $(n+1)$-manifold. For $\varepsilon > 0$ sufficiently small, the $n$-area of an $\varepsilon$-sphere around a point 
is between $\varepsilon^n\omega_{n}K^{-n-1}$ and $\varepsilon^n\omega_{n}K^{n+1}$, where $\omega_n$ is the $n$-volume of the unit $n$-sphere in Euclidean space.
\end{rmk}

\textbf{Acknowledgements:} The author thanks Dimitrios Ntalampekos for bringing this topic to his attention and for his help improving the paper, Aleksey Zinger for many helpful comments and discussions, Christian Lange for clarifiying remarks on his results in \cite{Lan}, and the anonymous referee for helpful comments which strengthened Theorem~\ref{main_thm}.

\section{Effective Bounds in Dimension 2}\label{dim_2_sec}

For $\alpha > 0$, let $g_\alpha$ denote the metric on $\R^2$ defined in polar coordinates by
$$g_\alpha = \Big(\frac{\alpha r}{2\pi}\Big)^2 d\theta^2 + dr^2.$$
If $\alpha = 2\pi$, this is the Euclidean metric.

\begin{dfn}\label{cone_dfn}
A \textit{cone} is a metric space isometric to a disk around $0$ in $(\R^2,g_\alpha)$. The point $0$ is called the \textit{tip} and the number $\alpha$ is called the \textit{cone angle}.
\end{dfn}

\begin{lmm}\label{angle_meaning_lmm}
The cone angle is the length of the shortest curve enclosing a disk of radius $r$ around the tip, divided by $r$.
\end{lmm}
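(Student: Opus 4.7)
The plan is to work directly in the model $(\R^2, g_\alpha)$ via polar coordinates and verify two inequalities: the circle of radius $r$ about the tip has length exactly $\alpha r$, and every other enclosing curve has length at least $\alpha r$.

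First I would compute the length of the ``coordinate circle'' $\{r = \text{const}\}$. Parametrizing by $\theta \in [0, 2\pi]$ and using $g_\alpha = (\alpha r / 2\pi)^2 \, d\theta^2 + dr^2$, the length is
$$\int_0^{2\pi} \frac{\alpha r}{2\pi} \, d\theta = \alpha r,$$
and this circle is clearly the boundary of the metric disk of radius $r$ around the tip (since the $r$-coordinate agrees with the distance to the tip along radial lines, which are geodesics by the warped-product structure).

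Next I would argue that $\alpha r$ is a lower bound. Let $\gamma \colon [0,1] \to (\R^2 \setminus \{0\}, g_\alpha)$ be any Lipschitz closed curve enclosing the disk of radius $r$; write $\gamma(t) = (r(t), \theta(t))$ in polar coordinates (lifting $\theta$ to $\R$). Because $\gamma$ encloses the disk, it lies in $\{r(t) \geq r\}$, and because it encloses the tip with nonzero winding number, $\theta$ changes by at least $2\pi$ in absolute value. Then
$$\textnormal{length}(\gamma) = \int_0^1 \sqrt{\bigl(\tfrac{\alpha r(t)}{2\pi}\bigr)^2 \dot\theta(t)^2 + \dot r(t)^2}\, dt \;\geq\; \frac{\alpha r}{2\pi}\int_0^1 |\dot\theta(t)|\, dt \;\geq\; \frac{\alpha r}{2\pi}\cdot 2\pi = \alpha r,$$
where the first inequality drops the $\dot r^2$ term and bounds $r(t) \geq r$, and the second uses the winding number bound.

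The only genuine subtlety is making precise what ``encloses'' means so that the winding number argument is rigorous; I would address this by reducing to Jordan curves (or passing to the underlying Jordan component bounding the region containing the tip), after which the winding number is $\pm 1$ by the Jordan curve theorem. Combining the two bounds gives that the infimum of lengths of enclosing curves is $\alpha r$ and is attained by the coordinate circle, which is the statement of the lemma.
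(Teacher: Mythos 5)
Your proof is correct and follows essentially the same route as the paper: an exact length computation for the coordinate circle, plus a lower bound obtained by collapsing the radial direction --- your pointwise estimate dropping $\dot r$ and replacing $r(t)$ by $r$ is exactly the statement that radial projection onto the circle of radius $r$ is $1$-Lipschitz, which is how the paper phrases it. The only cosmetic difference is that you make the topological input explicit via winding number (with the reduction to Jordan curves), whereas the paper leaves it implicit in the surjectivity of the projection onto the circle.
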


\begin{proof}
It is easily calculated that the length of the circle of radius $r$ divided by $r$ is the cone angle. The circle is the shortest curve encircling the disk, because the radial projection onto the circle is $1$-Lipschitz.
\end{proof}

\begin{crl}\label{cone_angle_crl}
Let $C_1$ and $C_2$ be cones with tips $p_1$ and $p_2$, respectively. If there is a $K$-bi-Lipschitz homeomorphism $f: C_1 \lra C_2$ such that $f(p_1) = p_2$, then
$$K^{-2} \textnormal{angle}(C_1) \leq \textnormal{angle}(C_2) \leq K^2\textnormal{angle}(C_1).$$
\end{crl}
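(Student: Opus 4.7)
The plan is to use Lemma~\ref{angle_meaning_lmm} to translate the cone angle into a metric-geometric quantity (length of shortest enclosing curve divided by radius), then exploit the fact that a $K$-bi-Lipschitz homeomorphism distorts both lengths and distances by factors of at most $K$, and preserves enclosure topologically. By symmetry (applying the same argument to $f^{-1}$), it suffices to prove the upper bound $\textnormal{angle}(C_2) \leq K^2 \textnormal{angle}(C_1)$.

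First I would fix a small $r > 0$ so that the closed disk $\ov{B}(p_1,r)$ lies in $C_1$ and its image under $f$ lies in $C_2$. For any rectifiable loop $\gamma_1$ in $C_1 \setminus \ov{B}(p_1, r)$ enclosing $p_1$ (i.e. enclosing the disk of radius $r$ around $p_1$), I would consider $\gamma_2 := f(\gamma_1)$. Since $f$ is $K$-Lipschitz,
\[
\textnormal{length}(\gamma_2) \leq K \cdot \textnormal{length}(\gamma_1).
\]
The main point is to argue that $\gamma_2$ encloses the disk of radius $K^{-1}r$ around $p_2$. For every point $x$ on $\gamma_1$, $d(p_1, x) \geq r$, so the bi-Lipschitz estimate and $f(p_1) = p_2$ give $d(p_2, f(x)) \geq K^{-1} r$; hence $\gamma_2$ avoids the open disk $B(p_2, K^{-1}r)$. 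Since $f$ is a homeomorphism and $p_1$ is enclosed by $\gamma_1$, the point $p_2$ is enclosed by $\gamma_2$; the open disk $B(p_2, K^{-1}r)$ is connected and contains $p_2$, so it is entirely contained in the bounded region cut off by $\gamma_2$.

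Applying Lemma~\ref{angle_meaning_lmm} in $C_2$, I get
\[
\textnormal{length}(\gamma_2) \geq K^{-1} r \cdot \textnormal{angle}(C_2),
\]
and combining with the Lipschitz estimate yields $K^{-1} r \cdot \textnormal{angle}(C_2) \leq K \cdot \textnormal{length}(\gamma_1)$. Taking the infimum over $\gamma_1$ and using Lemma~\ref{angle_meaning_lmm} in $C_1$ produces $\textnormal{angle}(C_2) \leq K^2 \cdot \textnormal{angle}(C_1)$. Applying the same argument to $f^{-1}$, which is also $K$-bi-Lipschitz, gives the reverse inequality.

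The only delicate step is the topological claim that $\gamma_2$ encloses a disk of radius $K^{-1}r$ around $p_2$; once the enclosure of $p_2$ is upgraded to enclosure of the whole disk via the connectedness argument above, the rest reduces to combining two one-line estimates.
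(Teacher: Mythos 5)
Your proposal is correct and follows essentially the same route as the paper: map a loop around $p_1$ forward under $f$, bound its length by $K$-Lipschitzness, note it encloses the disk of radius $K^{-1}r$ about $p_2$, apply Lemma~\ref{angle_meaning_lmm}, and use $f^{-1}$ for the reverse inequality. The paper simply uses the circle of radius $r$ directly (whose length is exactly $\textnormal{angle}(C_1)\,r$) rather than taking an infimum over general loops, and leaves the enclosure step, which you spell out, implicit.
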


\begin{proof}
Let $\gamma_r$ be the circle of radius $r$ around $p_1$. It has length $\textnormal{angle}(C_1)r$. As $f$ is $K$-Lipschitz, $f(\gamma_r)$ has length at most $K\textnormal{angle}(C_1)r$ and encloses the disk of radius $K^{-1}r$. By Lemma~\ref{angle_meaning_lmm}, $\textnormal{angle}(C_2) \leq K^2\textnormal{angle}(C_1)$. The other inequality follows by considering $f^{-1}$.
\end{proof}

\begin{proof}[Proof of Proposition~\ref{obstruction_prp}]
For every $\varepsilon > 0$ and point $x$ in a Riemannian surface, $x$ has a neighborhood $(1+ \varepsilon)$-bi-Lipschitz to a cone with angle $2\pi$. The result then follows from Corollary~\ref{cone_angle_crl}.
\end{proof}

\begin{proof}[Proof of Theorem~\ref{dim_2_thm}]
For each point $x \in X$, let $\rho(x)$ be half the distance to the nearest vertex (distinct from~$x$). The disk of radius $\rho(x)$ around $x$ is isometric to a cone. If $x$ is not a vertex, this cone is a disk in the plane. The isometries between disks around points in $X$ and disks in the plane define smooth (actually affine) coordinates on $X$, away from the vertices. The charts coming from the cones at vertices are compatible with the smooth atlas defined on the complement of the vertices. Therefore, we have defined a smooth structure on $X$.\\

We now smooth out the metric. Let $f$ be a smooth positive function of $r$ that vanishes to infinite order as $r$ approaches zero and equals $1$ for $r$ greater than $1$, such that $f$, $|f'|$, and $|f''|$ are all bounded above by $2^4$. Let $v$ be a vertex with cone angle $\alpha$ and $f_v(r) = f(\frac{r}{\rho(v)})$. Then the metric
$$\widetilde{g_\alpha} := \Bigg(f_v(r)\Big(\frac{\alpha}{2\pi} - 1\Big) + 1\Bigg)^2r^2d\theta^2 + dr^2$$
is smooth on a disk of radius $\rho(v)$ around $v$. It converges to the polyhedral metric as $r$ tends to~$\rho(v)$. As $\widetilde{g_\alpha}$ is a pointwise convex combination of $g_\alpha$ and the Euclidean metric, it is $K$-bi-Lipschitz to $g_\alpha$. Replacing $g_\alpha$ with $\widetilde{g_\alpha}$ for each vertex yields a Riemannian metric on $X$ which is $K$-bi-Lipschitz to the polyhedral metric.\\

We now bound the curvature of $\widetilde{g_\alpha}$ assuming $l = 2$ (i.e. $\rho(v) \geq 1$ for all vertices $v$). This can be achieved by scaling the metric, so there is no loss of generality. Plugging this into the formula for Gaussian curvature in orthogonal coordinates \cite[Section 3.4]{Opr}  yields
$$\textnormal{Curvature} = -\frac{1}{2\phi(r)}\Bigg(\frac{((\phi(r))^2)'}{\phi(r)}\Bigg)',$$
where $\phi(r) = \Big(f_v(r)\big(\frac{\alpha}{2\pi} - 1\big) + 1\Big)r$. A calculation reduces this to
$$\frac{(2\pi - \alpha)(2f_v'(r)+rf_v''(r))}{2\pi\phi(r)}.$$
The assumptions on $\rho(v)$ and the derivatives of $f$ allow us (via the chain rule) to bound the absolute value of the numerator by $96\pi(K-1)$. As $2\pi\phi(r)$ is a convex combination of $\alpha$ and $2\pi$, it is bounded below by the minimum of the two. Lastly, we must add a factor of $4/l^2$ to account for the normalization $l = 2$, which yields the claimed inequality.
\end{proof}

\section{Canonical Smoothing in Low Dimension}\label{low_dim_sec}
The proof of Theorem~\ref{main_thm} follows the same main argument as Theorem~\ref{dim_2_thm}. Broadly, one takes a standard model around each singular point, smooths it out, and patches the resulting metrics together. However, there are some additional difficulties. The first is that it is not so easy to construct local models for the metric. For surfaces, this is quite easy and we are able to parameterize them as a continuous family depending only on the cone angle. In dimensions $3$ and $4$, we instead rely on Lemma~\ref{canonical_smoothing_lmm} and must therefore account for different combinatorial types. The second complication is that the set of points for which the metric is singular can fail to be discrete in dimensions higher than $2$. This means that we must patch the smooth local metrics together using a partition of unity. Lemma~\ref{locality_lmm} ensures that this step retains control of the Lipschitz constant regardless of the partition of unity. In order to control the curvature and injectivity radius, we choose a standard partition. This choice is akin to the choice of $f$ in the proof of Theorem~\ref{dim_2_thm}.\\

Lemma~\ref{canonical_smoothing_lmm} below states that there is a canonical way to smooth triangulated manifolds of low dimension. In particular, the smoothing depends only on the local combinatorial structure. It was first stated for dimensions $2$ and $3$ in \cite[pp.207,208]{Thu}. The lemma follows from the proof of the main theorem in~\cite{Lan}.

\begin{lmm}[\cite{Lan}]\label{canonical_smoothing_lmm}
Let $n \leq 4$. There is a canonical differentiable structure on every triangulated $n$-manifold such that every combinatorial isomorphism $f: U \lra V$ between open star neighborhoods in triangulated $n$-manifolds is a diffeomorphism. Furthermore, if $X$ is a triangulated $n$-manifold, there is a subdivision of $X$ in which all the simplices are smoothly embedded with respect to the differentiable structure.
\end{lmm}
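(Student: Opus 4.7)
The plan is to deduce both assertions from the main theorem of \cite{Lan}, which provides a canonical smoothing of the open cone on any triangulated PL sphere of dimension at most $3$. Canonicity here means that any combinatorial isomorphism between the base links extends to a smooth diffeomorphism of the smoothed cones. This statement is known to fail for links of dimension $\geq 4$, which is the origin of the dimension restriction $n \leq 4$ throughout the paper.

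First, I would construct the smooth atlas on $X$ by covering it with open stars. On $X \setminus \Ver(X)$, the Euclidean affine structure on each top-dimensional simplex already defines a smooth structure, with adjacent top simplices glued by affine maps of their shared faces. Around each vertex $v$, the open star $\St(v)$ is combinatorially the open cone on the link $\text{Lk}(v)$, a triangulated PL sphere of dimension $n-1 \leq 3$; applying the canonical cone-smoothing of \cite{Lan} to this link provides a smooth chart on $\St(v)$, and consistency on overlaps of different stars follows from canonicity applied to the restrictions of the smoothings to sub-stars of edges. Given a combinatorial isomorphism $f : U \to V$ of open star neighborhoods, $f$ sends vertices to vertices and is determined by a combinatorial isomorphism of the corresponding links, so the canonical property from \cite{Lan} forces $f$ to be a diffeomorphism.

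For the subdivision claim, the simplices of $X$ themselves need not be smoothly embedded near their vertices, because the canonical smoothing deforms the affine structure there. However, after iterated barycentric subdivision, each new top-dimensional simplex either avoids $\Ver(X)$ entirely — in which case it is smoothly embedded by the affine structure — or lies inside the star of a single vertex $v$, where the canonical smoothing of the cone on $\text{Lk}(v)$ supplied by \cite{Lan} comes with an explicit compatible PL subdivision whose simplices are smoothly embedded. Combining these two cases yields the desired subdivision of $X$ globally.

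The main obstacle, and essentially the only nontrivial step, lies inside \cite{Lan} itself: the canonical cone-smoothing in dimension $3$ ultimately rests on Moise's theorem together with a naturality argument specific to low dimensions. The remaining work of the present lemma is the bookkeeping of assembling local canonical charts into a single global smooth structure on $X$ and checking that combinatorial isomorphisms preserve it, which is essentially automatic once the canonical property of the local smoothings is in hand.
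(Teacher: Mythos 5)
The paper does not actually prove this lemma: it is imported wholesale, with the remark that it was stated for $n\leq 3$ by Thurston and that it ``follows from the proof of the main theorem'' of Lange's paper \cite{Lan} (note: from the \emph{proof}, not from the statement, which concerns equivariant smoothing of PL manifolds rather than a ready-made canonical smoothing of cones on low-dimensional PL spheres as you describe it). Your decision to lean on \cite{Lan} for the hard naturality content is therefore in the same spirit as the paper, but the assembly argument you wrap around it has genuine gaps.

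First, the claim that on $X\setminus \Ver(X)$ ``the Euclidean affine structure on each top-dimensional simplex already defines a smooth structure'' is false in dimensions $3$ and $4$: the PL/affine structure is singular along the entire codimension-$2$ skeleton, not just at vertices. Near an interior point of an edge of a triangulated $3$- or $4$-manifold the space is locally a product of a line (or plane) with a $2$-dimensional cone over the link of that edge, and simplexwise-affine charts around such points do not have smooth transition functions; a canonical smoothing must therefore also treat the links of edges (and, for $n=4$, of $2$-simplices), typically by an induction over skeleta, and must do so compatibly with the vertex-link smoothings. Second, and relatedly, the compatibility of the cone smoothings at adjacent vertices on the overlap $\St(v)\cap\St(w)$, which is the open star of the edge $[v,w]$ and not itself a vertex star, does not ``follow from canonicity'' as you assert: the canonicity statement only says that combinatorial isomorphisms of star neighborhoods are diffeomorphisms, and it gives no a priori identification of the structure induced on the overlap by the smoothing at $v$ with the one induced by the smoothing at $w$. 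This overlap compatibility (and the analogous compatibility with the structure away from vertices) is exactly the nontrivial naturality content of Lange's proof, so declaring it ``essentially automatic'' hides the main point rather than proving it. The same defect propagates to your subdivision argument: a simplex of a barycentric subdivision that avoids $\Ver(X)$ but meets the codimension-$2$ skeleton is not known to be smoothly embedded ``by the affine structure,'' since the canonical structure there is not the affine one. As it stands, your write-up is a plausible outline of how such a proof is organized, but the steps you treat as formal are precisely the ones the paper delegates to \cite{Lan}.
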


The next lemma shows that locally defined Riemannian metrics can be assembled into a globally defined metric while retaining bi-Lipschitz control. Throughout, when restricting to a subset, we consider the intrinsic length metric of the subset. We also note that two length metrics on a space $X$ are $K$-bi-Lipschitz if they assign the same length to each path, up to a factor of $K$. By subdividing, it suffices to check this for short segments of paths.

\begin{lmm}\label{locality_lmm}
Let $X$ be a smooth manifold and $d$ be a length metric on $X$. Suppose $\{U_\alpha\}$ is an open cover of $X$, $\{\phi_\alpha\}$ is a partition of unity subordinate to $\{U_\alpha\}$, and $\{g_\alpha\}$ is a collection of Riemannian metrics such that each $g_\alpha$ is $K$-bi-Lipschitz to $d$ on $U_\alpha$. Then,
$$g := \sum \phi_\alpha g_\alpha$$
is $K$-bi-Lipschitz to $d$ on $X$.
\end{lmm}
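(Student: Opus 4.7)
The plan is to compare the $g$-length and $d$-length of paths at the infinitesimal level. As noted just before the lemma, it suffices to verify that $K^{-1}L_d(\gamma) \leq L_g(\gamma) \leq K L_d(\gamma)$ for arbitrarily short sub-segments $\gamma$ of paths in $X$.

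First I localize. Using local finiteness of $\{\phi_\alpha\}$, around each $x \in X$ I choose an open neighborhood $W$ on which only finitely many $\phi_\alpha$ are nonzero, and shrink $W$ inside the finite intersection $\bigcap\{U_\alpha : x \in \supp\phi_\alpha\}$. Then any $\phi_\alpha$ which is nonzero somewhere on $W$ satisfies $W \subseteq U_\alpha$. After subdividing, I reduce to a path $\gamma$ contained in such a $W$; let $A$ be the finite set of indices $\alpha$ with $\phi_\alpha \circ \gamma \not\equiv 0$, so that $\gamma \subseteq U_\alpha$ for every $\alpha \in A$.

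Next I pass to the infinitesimal level. Parameterizing $\gamma$ by $d$-arclength---which is finite because any $g_\alpha$ with $\alpha \in A$ is bi-Lipschitz to $d$ and assigns finite length to a smooth path---I have metric derivative $|\dot\gamma|_d \equiv 1$ almost everywhere. Applying the hypothesis $K^{-1}L_d(\gamma|_I) \leq L_{g_\alpha}(\gamma|_I) \leq K L_d(\gamma|_I)$ to arbitrarily short sub-intervals $I$ and using $L_{g_\alpha}(\gamma|_I) = \int_I \|\dot\gamma(t)\|_{g_\alpha}\,dt$, the Lebesgue differentiation theorem yields the pointwise bound $K^{-1} \leq \|\dot\gamma(t)\|_{g_\alpha} \leq K$ for a.e.\ $t$ and every $\alpha \in A$. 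Since $\|\dot\gamma(t)\|_g^2 = \sum_\alpha \phi_\alpha(\gamma(t))\|\dot\gamma(t)\|_{g_\alpha}^2$ is a convex combination of values lying in $[K^{-2},K^2]$, the same two-sided bound holds for $\|\dot\gamma(t)\|_g^2$. Taking square roots and integrating gives the desired length inequalities.

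The main obstacle, and the heart of why the final constant is $K$ rather than $K^2$, is the infinitesimal comparison in the previous paragraph: the partition-of-unity expression for $g$ is quadratic in the $g_\alpha$, but the convex combination structure (with $\sum_\alpha \phi_\alpha = 1$) preserves the pointwise factor rather than squaring it. If the weights were replaced by an unweighted sum or a general linear combination, the argument would at best yield a $K^2$ bound.
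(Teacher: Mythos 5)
Your proof is correct and follows essentially the same route as the paper's: at each point $\|v\|_g^2$ is a convex combination of the $\|v\|_{g_\alpha}^2$ (this is where $\sum\phi_\alpha=1$ gives the factor $K$ rather than $K^2$), and one then compares lengths over short segments of curves. You merely spell out the localization via local finiteness and the infinitesimal comparison via arclength parameterization and Lebesgue differentiation, which the paper leaves implicit.
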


\begin{proof}
At any point $x \in X$, $g|_x$ is a convex sum of finitely many $g_\alpha|_x$. Therefore, $g|_x(v,v)$ is in the convex hull of $g_\alpha|_x(v,v) \in \R$ for all $v \in T_x X$. As the square root function is monotonic, there exist $\alpha$ and $\alpha'$ such that for all $v \in T_x X$,
\begin{equation}\label{convex_sum_eq}
\sqrt{g_\alpha|_x(v,v)} \leq \sqrt{g|_x(v,v)} \leq \sqrt{g_{\alpha'}|_x(v,v)}.
\end{equation}
Integrating (\ref{convex_sum_eq}) over very short segments of a smooth curve and using the fact that each $g_\alpha$ is $K$-bi-Lipschitz to $d$ shows that $g$ is $K$-bi-Lipschitz to $d$.
\end{proof}

The proof of Theorem~\ref{main_thm} is below. The idea is as follows. On an $M$-quasiconformal polyhedral manifold, there are only finitely many combinatorial types locally. The metric is locally determined up to scale and $M$-bi-Lipschitz homeomorphism by the combinatorial structure. Using the canonical smoothing of Lemma~\ref{canonical_smoothing_lmm}, we can smooth each of these local models in a globally coherent way, and use Lemma~\ref{locality_lmm} to glue the local smoothings together.

\begin{proof}[Proof of Theorem~\ref{main_thm}]
%As $X$ is homeomorphic to an $n$-manifold, the star of each vertex is a triangulation of the $(n-1)$-sphere. As there is a unique piecewise-linear structure on the $(n-1)$-sphere for $n \leq 4$ [ref!!!], $X$ is a piecewise-linear manifold.\\
Let $\mathcal{T}$ be the collection of polyhedral $n$-balls $S$ with at most $M$ simplices, all of which contain some vertex $v_S$ in common. Equivalently, $\mathcal{T}$ can be thought of as the set of possible stars around vertices of an $M$-quasiconformal manifold. Fix a Riemannian metric $g_S$ and a compactly supported positive function $\phi_S$ on each $S \in \mathcal{T}$ which are smooth with respect to the canonical smooth structure given by Lemma~\ref{canonical_smoothing_lmm} and such that $\phi_S = 1$ on the star of $v_S$ in the first barycentric subdivision of~$S$. By averaging, we may assume both $g_S$ and $\phi_S$ are invariant under the (finite) simplicial symmetry group of S. Let $d_{S,r}$ be the polyhedral metric on $S$ in which all $1$-simplices have length $r$. The simplices of some subdivision of $S$ are smoothly embedded by Lemma~\ref{canonical_smoothing_lmm}. Therefore, for each $S$, there exists $K_S \geq 1$ such that $g_S$ is $K_S$-bi-Lipschitz to $d_{S,1}$. Therefore, $d_{S,r}$ is $K_S$-bi-Lipschitz to $r^2 g_S$. \\

Let $(X,d)$ be an $M$-quasiconformal polyhedral $n$-manifold, $v$ be a vertex of $X$, and $\textnormal{St}(v)$ be the star of~$v$. By $M$-quasiconformality, $\textnormal{St}(v) \in \mathcal{T}$ and there is a minimal $r > 0$ such that $d$ is $M$-bi-Lipschitz to $d_{\textnormal{St}(v),r}$. Therefore, the Riemannian metric $r^2 g_{\textnormal{St}(v)}$ on $\textnormal{St}(v)$ is $MK_S$-bi-Lipschitz to $d$.\\

Thus, $\{\textnormal{St}(v)\}$ is an open cover of $X$, and each $\textnormal{St}(v)$ supports a metric which is $M\max_{S \in T}(K_S)$-bi-Lipschitz to $d$. Therefore, by Lemma~\ref{locality_lmm}, there is a Riemannian metric on $X$ that is $M\max_{S \in T}(K_S)$-bi-Lipschitz to $d$. The number $M\max_{S \in T}(K_S)$ depends only on $M$, as claimed.\\

Now, assume $X$ is $M$-Lipschitz. Then, $r$, as above, can always be chosen to be $1$. By definition, the sum of the functions $\phi_{\textnormal{St}(v)}$ is everywhere positive. If it is $1$ everywhere, the functions $\phi_{\textnormal{St}(v)}$ form a partition of unity. This can easily be obtained by normalizing. On each star, this normalization depends only on the stars of neighboring points. There are only finitely many such combinations, and therefore only finitely many possible normalizations. Therefore, the restriction to a star of the metric $g$ that results from gluing $g_{\textnormal{St}(v)}$ along the partition of unity is isometric to one of only finitely many metrics.
There exists $\varepsilon > 0$ such that an $\varepsilon$-ball around each point of $X$ is contained in some star. Therefore, the restriction of $g$ to an $\varepsilon$-ball around each point varies in a compact family and thus satisfies bounds on any locally-defined quantity, such as curvature and injectivity radius. The construction of the metric $g$ depends only on the functions $\phi_{\textnormal{St}(v)}$ and metrics $g_{\textnormal{St}(v)}$ (or $r^2 g_{\textnormal{St}(v)}$ in the $M$-quasiconformal case), which are invariant under the simplicial isometry group of $(X,d)$; it is therefore manifestly equivariant.
\end{proof}

\vspace{.1in}

{\it Department of Mathematics, Stony Brook University, Stony Brook, NY 11794\\
spencer.cattalani@stonybrook.edu}

\end{document}